\begin{document}

\setlength{\parindent}{2em}
\newtheorem{Theorem}{Theorem}
\newtheorem{Lemma}{Lemma}
\newtheorem{Definition}{Definition}
\newtheorem{corollery}{Corollery}

\title{\bf A Poincar\'e-Hopf type formula for a pair of vector fields}
\author{Xu Chen \footnote{{\it Email:} xiaorenwu08@163.com. ChongQing, China. \ \ ORCID: https://orcid.org/0000-0002-8889-8040}}
\date{}
\maketitle

\begin{abstract}
For two complex vector bundles admitting a homomorphism between them, a Poincar\'e-Hopf formula for the difference of the Chern character numbers of these two vector bundles with isolated singularities is established by Huitao Feng, Weiping Li and Weiping Zhang. This article extend their reslut about Poincar\'e-Hopf type formula for the difference of the Chern character numbers to the non-isolated singularities. A special connection $\widetilde{\nabla}_{1}^{E}$ be constructed, the Chern character ${\rm ch}(E,\widetilde{\nabla}_{1}^{E})$ is the key role for the non-isolated singularities. As a consequence, a Poincar\'e-Hopf type formula for a pair of vector field with the function $h^{T_{\mathbb{C}}M}(\cdot,\cdot)$ has non-isolated zero points over a closed, oriented smooth manifold of dimension $2n$ is established.

\noindent{{\bf Keyword£º}Chern character; Signature $\mathbb{Z}_2$-graded; Poincar\'e-Hopf type formula; non-isolated zero points}
\end{abstract}

\section{Introduction}
Let $M$ be a closed, oriented smooth manifold of dimension $2n$. Let $T_{\mathbb{C}}M=TM\otimes\mathbb{C}$ be the complexification of $TM$. Let $g^{TM}$ be a the Riemannian metric on $M$, it induces canonically a complex symmetric bilinear form on $T_{\mathbb{C}}M$, denoted by $h^{T_{\mathbb{C}}M}$(cf.[2]). Let any $K\in\Gamma(T_{\mathbb{C}}M)$ be the section of $T_{\mathbb{C}}M$, then $K=\xi+\sqrt{-1}\eta$, where $\xi$ and $\eta$ be vector field, we define
$$h^{T_{\mathbb{C}}M}(K,K)=|\xi|^2_{g^{TM}}-|\eta|^2_{g^{TM}}+2\sqrt{-1}\langle\xi,\eta\rangle_{g^{TM}}$$
Surely, $h^{T_{\mathbb{C}}M}(K,K)$ is a smooth function on $M$, we
denoted the set of zero pionts of this function by $Zero(K)$. The
Euler number of manifold $M$ is denoted by $\chi(M)$. H. Jacobowitz [7] established the following result: if $Zero(K)=\emptyset$, then $\chi(M)=0$.

In the end of [7], Jacobowitz asked a question like that: Is there a counting formula for $\chi(M)$ of Poincar\'e-Hopf type, when $Zero(K)\neq\emptyset$?\par

Huitao Feng, Weiping Li and Weiping Zhang [4] establish a Poincar\'e-Hopf formula for the difference of the Chern character numbers of two
vector bundles with $Zero(K)$ is isolated, and use the formula they get a Poincar\'e-Hopf type formula to the set of $Zero(K)$ consists of a finite number of points on a spin manifold $M$. This result is an answers of the question asked by Jacobowitz in [7].

We [2] establish a Poincar\'e-Hopf type formula for a pair of sections of an oriented real vector bundle of rank $2n$ over a closed, oriented manifold of dimension $2n$, with isolated zero points, which generalized the corresponding result in [4].

In this article, we will extend the Poincar\'e-Hopf type formula for the difference of the Chern character numbers of two complex vector bundles to $Zero(K)$ is non-isolated.

\begin{Theorem}
$$\langle{\rm ch}(E_{+})-{\rm ch}(E_{-}),[M]\rangle=\sum_{X}\langle\frac{{\rm ch}(E_{+})-{\rm ch}(E_{-})}{e(\mathcal{N}_{X})},[X]\rangle$$
\end{Theorem}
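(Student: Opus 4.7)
The plan is to adapt the superconnection/Mathai--Quillen strategy used in [3] for the isolated case to the situation where $Zero(K)$ is a disjoint union of smooth submanifolds $X$. Following the construction in [3], the section $K$ together with the form $h^{T_{\mathbb{C}}M}$ produces an odd self-adjoint bundle map $V$ on the $\mathbb{Z}_{2}$-graded bundle $E=E_{+}\oplus E_{-}$ that is invertible precisely on $M\setminus Zero(K)$. Taking any Hermitian connection $\nabla^{E}$ preserving the grading, one forms the Quillen-type superconnection $A_{t}=\nabla^{E}+\sqrt{t}\,V$. The differential form $\mathrm{Tr}_{s}[\exp(-A_{t}^{2})]$ is closed, represents $\mathrm{ch}(E_{+})-\mathrm{ch}(E_{-})$ in de Rham cohomology for every $t>0$, and varies with $t$ only by exact forms.

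First I would express the left-hand pairing as
$$\langle \mathrm{ch}(E_{+})-\mathrm{ch}(E_{-}),[M]\rangle=\int_{M} \mathrm{Tr}_{s}\bigl[\exp(-A_{t}^{2})\bigr]$$
valid for every $t>0$, and study the limit as $t\to+\infty$. Standard heat-kernel estimates parallel to those in [3] show that on any compact subset of $M\setminus Zero(K)$ the integrand decays like $e^{-ct}$ for some $c>0$. Hence all of the mass concentrates in an arbitrarily small tubular neighbourhood of $Zero(K)=\bigsqcup_{X}X$, and it suffices to compute, component by component, the limit of $\int_{U_{X}}\mathrm{Tr}_{s}[\exp(-A_{t}^{2})]$ as $t\to\infty$, where $U_{X}$ is a tubular neighbourhood diffeomorphic to a disk bundle in $\mathcal{N}_{X}$.

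On $U_{X}$ I would adapt $\nabla^{E}$ to the splitting induced by $X\hookrightarrow M$ and decompose the superconnection into a tangential part along $X$ and a transverse part along the fibres of $\mathcal{N}_{X}$. In suitable normal coordinates, $V$ possesses a well-defined linear leading symbol in the normal directions, coming from the transverse behaviour of $h^{T_{\mathbb{C}}M}(K,K)$, so after the usual rescaling by $\sqrt{t}$ the transverse factor of $\mathrm{Tr}_{s}[\exp(-A_{t}^{2})]$ becomes a Mathai--Quillen-type Gaussian representative of the Thom class of $\mathcal{N}_{X}$. Fibrewise integration of that Gaussian produces $e(\mathcal{N}_{X})^{-1}$ when paired with $[X]$, while the tangential factor supplies $\mathrm{ch}(E_{+}|_{X})-\mathrm{ch}(E_{-}|_{X})$; summing over $X$ then yields the right-hand side of the theorem.

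The main obstacle, and the step genuinely new beyond [3], is the transverse analysis near a non-isolated zero component. One must (i)~identify the normal leading symbol of $V$ as a non-degenerate odd endomorphism on $\mathcal{N}_{X}$ so that the rescaled limit is Gaussian in the normal variables, and (ii)~control the remainder terms uniformly in $t$ so that the $t\to\infty$ limit reduces cleanly to the Mathai--Quillen Thom form, with no surviving contributions from higher-order normal or tangential derivatives of $V$. Once these two analytic points are settled, the fact that fibre integration of the Thom form equals the identity on $X$, combined with Poincar\'e duality along $X$, converts the localized integral into the pairing $\langle(\mathrm{ch}(E_{+})-\mathrm{ch}(E_{-}))/e(\mathcal{N}_{X}),[X]\rangle$, completing the proof.
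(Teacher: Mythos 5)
Your route --- the Quillen superconnection $A_{t}=\nabla^{E}+\sqrt{t}\,V$, localization as $t\to\infty$, and a Mathai--Quillen Gaussian in the normal directions --- is genuinely different from the paper's, but the two steps you yourself flag as ``the main obstacle'' are precisely where the proof is missing, and they are not merely technical. The theorem assumes only that $Z(v)$ is a compact submanifold; there is no hypothesis that $V$ vanishes to first order, non-degenerately, in the directions normal to $X$. Without such a hypothesis your step (i) can simply fail: if $v$ degenerates to higher order transversally to $X$, the $\sqrt{t}$-rescaling does not produce a Gaussian Thom representative and the whole $t\to\infty$ analysis breaks down. Even granting transversal non-degeneracy, identifying the fibrewise limit with $(\mathrm{ch}(E_{+})-\mathrm{ch}(E_{-}))|_{X}/e(\mathcal{N}_{X})$ is a local index computation you have not carried out; note also that fibre integration of a Thom form gives $1$, not $e(\mathcal{N}_{X})^{-1}$, so the factor $1/e(\mathcal{N}_{X})$ must enter through the cohomological identity $i^{*}i_{!}(u)=e(\mathcal{N}_{X})\cdot u$ rather than through the Gaussian integral itself.

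The paper avoids all of the asymptotic analysis. Since $v$ is invertible on $M\setminus Z(v)$, one may gauge the connection on $E_{+}$ to $v^{-1}\nabla^{E_{-}}v$ there (glued to $\nabla^{E_{+}}$ by a cutoff $\rho$ equal to $1$ outside the tubular neighbourhoods); with this choice the Chern--Weil representative of $\mathrm{ch}(E_{+})-\mathrm{ch}(E_{-})$ vanishes \emph{identically} on $M\setminus\bigcup U_{X}$, so the integral localizes to the $U_{X}$ with no limit to take and no non-degeneracy assumption (Lemma 1). The passage from $\int_{U_{X}}$ to $\langle(\mathrm{ch}(E_{+})-\mathrm{ch}(E_{-}))/e(\mathcal{N}_{X}),[X]\rangle$ is then purely topological, via integration over the fibre of $\mathcal{N}_{X}$ and the Thom-isomorphism identity $i^{*}i_{!}=e(\mathcal{N}_{X})\cdot$ (Lemma 2). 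To rescue your argument you would need either to add a transversal non-degeneracy hypothesis on $V$ along each $X$ and then actually perform the rescaled local computation, or to replace the $t\to\infty$ limit by this soft gauge-transformation argument.
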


By use of the Poincar\'e-Hopf type formula for
the difference of the Chern character numbers of two complex vector bundles with $Zero(K)$ is non-isolated, we establish a Poincar\'e-Hopf type formula for a pair of vector field with the function $h^{T_{\mathbb{C}}M}(\cdot,\cdot)$
has non-isolated zero points over a closed, oriented smooth manifold of dimension $2n$.
\begin{Theorem}
Let $M$ be a closed, oriented smooth manifold of dimension $2n$ and $n\geq2$. Let
$T_{\mathbb{C}}M=TM\otimes\mathbb{C}$ be the complexification of
$TM$. Let $g^{TM}$ be a the Riemannian metric on $M$, it induces
canonically a complex symmetric bilinear form on $T_{\mathbb{C}}M$,
denoted by $h^{T_{\mathbb{C}}M}$. Let $K=\xi+\sqrt{-1}\eta$, where
$\xi$ and $\eta$ are vector fields,
$K\in\Gamma(T_{\mathbb{C}}M)$ be the section of $T_{\mathbb{C}}M$.
Let $X$ is the connected component of $Zero(K)$, and $\mathcal{N}_{X}$ be the normal bundle of the connected component $X$,
then
$$\chi(M)=\frac{1}{(-2)^n}\sum_{X}\langle\frac{{\rm ch}(\Lambda_{+}(T^{*}M\otimes\mathbb{C}))-{\rm ch}(\Lambda_{-}(T^{*}M\otimes\mathbb{C}))}{e(\mathcal{N}_{X})},[X]\rangle$$
\end{Theorem}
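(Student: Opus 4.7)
The plan is to apply Theorem 1 with the pair $E_+ = \Lambda_+(T^*M \otimes \mathbb{C})$ and $E_- = \Lambda_-(T^*M \otimes \mathbb{C})$, equipped with the signature $\mathbb{Z}_2$-grading on the complexified exterior algebra. The required bundle morphism $E_+ \to E_-$ that produces the singular locus $Zero(K)$ is provided by Clifford multiplication by the complex vector field $K$, and the coefficient $(-2)^n$ emerges by identifying the resulting Chern character difference with a multiple of the Euler class of $TM$.

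First I would verify the properties of $c(K) := K^\flat \wedge \cdot - \iota_K$, extended $\mathbb{C}$-linearly to complex sections. This operator is odd with respect to the signature grading, so it provides a bundle morphism $\phi: \Lambda_+(T^*M \otimes \mathbb{C}) \to \Lambda_-(T^*M \otimes \mathbb{C})$; the Clifford relation yields $c(K)^2 = -h^{T_{\mathbb{C}}M}(K,K) \cdot \mathrm{Id}$, so $\phi$ is an isomorphism at a point $p \in M$ if and only if $h^{T_{\mathbb{C}}M}(K,K)(p) \neq 0$. Hence the degeneracy set of $\phi$ is exactly $Zero(K)$, and its connected components are the submanifolds $X$ appearing in the statement of Theorem 2. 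Invoking Theorem 1 with this data gives
$$\langle {\rm ch}(\Lambda_+) - {\rm ch}(\Lambda_-), [M] \rangle = \sum_X \left\langle \frac{{\rm ch}(\Lambda_+) - {\rm ch}(\Lambda_-)}{e(\mathcal{N}_X)}, [X] \right\rangle.$$

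Next I would identify the left-hand side with $(-2)^n \chi(M)$. By the splitting principle, let $x_1, \ldots, x_n$ denote the formal Pontryagin roots of $TM$. A standard Chern character computation for the signature-graded exterior algebra (for instance via the local identification with tensor products of the complex spinor bundle) yields
$$ {\rm ch}(\Lambda_+(T^*M \otimes \mathbb{C})) - {\rm ch}(\Lambda_-(T^*M \otimes \mathbb{C})) = (-1)^n \prod_{i=1}^{n} \bigl(e^{x_i} - e^{-x_i}\bigr),$$
whose component of degree $2n$ is $(-1)^n \cdot 2^n \cdot x_1 \cdots x_n = (-2)^n e(TM)$. Integrating over $M$ and applying the Chern--Gauss--Bonnet theorem gives $\langle {\rm ch}(\Lambda_+) - {\rm ch}(\Lambda_-), [M] \rangle = (-2)^n \chi(M)$. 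Dividing the displayed identity by $(-2)^n$ then produces the formula of Theorem 2.

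The main obstacle is pinning down the sign in the Chern character computation, i.e., ensuring that the coefficient of $e(TM)$ is $(-2)^n$ rather than $2^n$. This comes down to fixing a signature $\mathbb{Z}_2$-grading (equivalently, a chirality operator on $\Lambda(T^*M \otimes \mathbb{C})$) that is compatible with both the direction $\Lambda_+ \to \Lambda_-$ of the morphism $c(K)$ and the chosen orientation of $M$. A secondary technicality is to verify that each connected component $X$ of $Zero(K)$ is a smooth submanifold with a well-defined normal bundle $\mathcal{N}_X$, so that Theorem 1 genuinely applies in the non-isolated setting with the Euler class $e(\mathcal{N}_X)$ in the denominator.
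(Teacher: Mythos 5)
Your strategy coincides with the paper's: apply Theorem 1 to $E_{\pm}=\Lambda_{\pm}(T^{*}M\otimes\mathbb{C})$ with a Clifford-multiplication morphism built from $K$, and identify $\langle{\rm ch}(\Lambda_{+})-{\rm ch}(\Lambda_{-}),[M]\rangle$ with $(-2)^{n}\chi(M)$ (your splitting-principle computation is the content of the paper's Lemma 4 and Corollary 2, quoted there from [6]). The gap is in the step identifying the degeneracy locus of your morphism with $Zero(K)$. Since $c(K)$ is odd, it has two off-diagonal blocks $\phi:\Lambda_{+}\to\Lambda_{-}$ and $\phi':\Lambda_{-}\to\Lambda_{+}$, and the relation $c(K)^{2}=-h^{T_{\mathbb{C}}M}(K,K)\cdot{\rm Id}$ only says $\phi'\phi=-h^{T_{\mathbb{C}}M}(K,K)\,{\rm Id}$. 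This gives invertibility of $\phi$ where $h^{T_{\mathbb{C}}M}(K,K)\neq 0$, but the converse does not follow: when $h^{T_{\mathbb{C}}M}(K,K)=0$ the relation is perfectly compatible with $\phi$ invertible and $\phi'=0$. This is not a hypothetical loophole. For $n=1$, taking $\xi=e_{1}$, $\eta=\pm e_{2}$ at a point (so $h^{T_{\mathbb{C}}M}(K,K)=0$), a direct computation on the bases $1\pm\sqrt{-1}\,e_{1}^{*}\wedge e_{2}^{*}$, $e_{1}^{*}\pm\sqrt{-1}\,e_{2}^{*}$ of $\Lambda_{\pm}$ shows that for one of the two signs the map $c(\xi)+\sqrt{-1}c(\eta):\Lambda_{+}\to\Lambda_{-}$ is an isomorphism even though $h^{T_{\mathbb{C}}M}(K,K)=0$. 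This is precisely why the theorem carries the hypothesis $n\geq 2$, which your argument never invokes. The paper isolates this issue as Lemma 3 (quoted from [2],[3]): for the morphism $v_{K}=\tau c(\xi)+\sqrt{-1}c(\eta)$ one has $Z(v_{K})=Zero(K)$ when $n\geq 2$, while for $n=1$ the two sets differ by the set $Z_{+}$ of points where $\xi,\eta$ form an oriented frame. To close the gap you must either quote that lemma or prove the $n\geq 2$ case directly; one route with your $c(K)$: if $h^{T_{\mathbb{C}}M}(K,K)=0$ and $\phi$ were invertible, then $\phi'=0$, and choosing $e\neq 0$ orthogonal to both $\xi$ and $\eta$ (possible since $2n-2\geq 2$) the Clifford anticommutation relation forces $\phi\circ c(e)|_{\Lambda_{-}}=0$, hence $c(e)|_{\Lambda_{-}}=0$, contradicting $c(e)^{2}=-|e|^{2}$.

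Two smaller points. First, the smoothness of the components $X$ of $Zero(K)$ and the existence of $\mathcal{N}_{X}$ is not something one can ``verify'': it is a standing hypothesis of the paper (``we always assume $Z(v_{K})$ is a compact submanifold of $M$'') and can fail for general $K$; your proof should state it as an assumption. Second, your morphism $c(\xi)+\sqrt{-1}c(\eta)$ differs from the paper's $\tau c(\xi)+\sqrt{-1}c(\eta)$; either can be made to work for $n\geq 2$, but the invertibility analysis must be done for whichever one you actually use, and the references [2],[3] only cover the latter.
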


\section{The difference of the Chern character numbers}
Let $E_{+}$, $E_{-}$ be two complex vector bundles over $M$, and $E=E_{+}\oplus E_{-}$ be the $Z_{2}$-graded complex vector bundle over $M$. Let
$\nabla^{E_{+}}$,$\nabla^{E_{-}}$ be the connection about $E_{+}$ and $E_{-}$, and
$\nabla^{E}=\left(
 \begin{array}{cc}
   \nabla^{E_{+}}& 0 \\
     0 & \nabla^{E_{-}} \\
      \end{array}
 \right)$
be the $Z_{2}$-graded connection on E.

Let $$v\in\Gamma(\textrm{Hom}(E_{+}, E_{-}))$$ be a homomorphism between $E_{+}$ and $E_{-}$.
Let $$v^{*}\in\Gamma(\textrm{Hom}(E_{-}, E_{+}))$$ be the adjoint of $v$ with respect to the Hermitian metrics on $E_{\pm}$ respectively. And
$$V=\left(
 \begin{array}{cc}
   0& v^{*} \\
     v & 0 \\
      \end{array}
 \right)\in\Gamma(\textrm{Hom}(E, E))$$

Let $Z(v)$ denote the set of the points at which $v$ is noninvertible.We always assume that $Z(v)$ is the compact submanifold of $M$, the connected components of $Z(v)$ is denoted by $X$, and $Z(v)=\bigcup X$. Let $U_{X}$ be the tubular neighborhood of the connected component $X$. Let $\mathcal{N}_{X}$ be the normal bundle of the connected component $X$. By the tubular neighborhood theorem(cf.[7]),$U_{X}$ is diffeomorphic to the total space of the normal bundle $\mathcal{N}_{X}$.

\begin{Lemma}The following identity holds,
$$\langle{\rm ch}(E_{+})-{\rm ch}(E_{-}),[M]\rangle=\sum_{X}\langle{\rm ch}(E_{+})-{\rm ch}(E_{-}),[U_{X}]\rangle$$
\end{Lemma}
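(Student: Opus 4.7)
The natural strategy is to produce a Chern--Weil representative of ${\rm ch}(E_{+})-{\rm ch}(E_{-})$ that is compactly supported inside $\bigcup_{X}U_{X}$; once such a representative is in hand, the lemma reduces to the additivity of the integral over the disjoint tubular neighborhoods. The cohomological content is that $v$ is a bundle isomorphism $E_{+}\to E_{-}$ on $M\setminus Z(v)$, so the difference class is trivial there and ought to be realized by a form living only near $Z(v)$.

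To make this explicit I would use the odd superconnection $A_{t}=\nabla^{E}+\sqrt{t}\,V$ on the $\mathbb{Z}_{2}$-graded bundle $E=E_{+}\oplus E_{-}$. For every $t\geq 0$ the Chern character form
$$\omega_{t}={\rm str}\bigl(\exp(-A_{t}^{2})\bigr)$$
(with the usual $2\pi i$ normalization) is closed and represents ${\rm ch}(E_{+})-{\rm ch}(E_{-})$, by the standard superconnection transgression $\partial_{t}\omega_{t}=-d\beta_{t}$. Since $V^{2}=\mathrm{diag}(v^{*}v,\,vv^{*})$ is strictly positive on $M\setminus Z(v)$, the Bismut-type heat-kernel estimates show that $\omega_{t}\to 0$ exponentially on every compact subset of $M\setminus Z(v)$ as $t\to\infty$, and that $\gamma:=\int_{0}^{\infty}\beta_{t}\,dt$ converges to a smooth form on $M\setminus Z(v)$.

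Next, choose smooth cutoffs $\rho_{X}:M\to[0,1]$ supported in $U_{X}$ with $\rho_{X}\equiv 1$ on a slightly smaller tubular neighborhood $U_{X}'\subset\subset U_{X}$, and set $\rho=1-\sum_{X}\rho_{X}$. Then $\rho\gamma$ vanishes near $Z(v)$ and extends by zero to a smooth form on $M$. Define
$$\widetilde\omega=\omega_{0}-d(\rho\gamma).$$
It is closed on $M$ and cohomologous to $\omega_{0}$, hence represents ${\rm ch}(E_{+})-{\rm ch}(E_{-})$. On $M\setminus\bigcup_{X}U_{X}$ one has $\rho\equiv 1$, so $\widetilde\omega=\omega_{0}-d\gamma=\lim_{T\to\infty}\omega_{T}=0$. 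Thus $\widetilde\omega$ is supported in $\bigcup_{X}U_{X}$, and after shrinking so that the $U_{X}$ are pairwise disjoint
$$\int_{M}\widetilde\omega=\sum_{X}\int_{U_{X}}\widetilde\omega,$$
which is the claimed identity once one interprets $\langle\cdot,[U_{X}]\rangle$ as integration of the localized representative over $U_{X}$.

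The main obstacle is the exponential decay of $\omega_{t}$ and the convergence of $\int_{0}^{\infty}\beta_{t}\,dt$ on compact subsets of $M\setminus Z(v)$. These are the standard analytic ingredients of Mathai--Quillen/Bismut localization, originally formulated for isolated zeros of $V$; in the present setting where $Z(v)$ is a submanifold they still follow from the uniform positive lower bound on $V^{2}$ away from $Z(v)$, but the uniformity of the estimates in a neighborhood of $\partial U_{X}$ needs to be justified carefully.
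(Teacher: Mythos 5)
Your proof is correct in outline, but it takes a genuinely different route from the paper. You localize the Chern character by the Quillen--Mathai--Quillen method: deform by the superconnection $A_{t}=\nabla^{E}+\sqrt{t}\,V$, use the fiberwise positivity of $V^{2}$ off $Z(v)$ to get exponential decay of ${\rm str}(\exp(-A_{t}^{2}))$ and convergence of the transgression integral, and then cut off to produce a representative supported in $\bigcup_{X}U_{X}$. The paper instead avoids all analysis: since $v$ is invertible off $Z(v)$, it replaces $\nabla^{E}$ by the gauge-modified connection $\widetilde{\nabla}_{1}^{E}=\nabla^{E}+\rho\,v^{-1}(\nabla^{E_{-}}v-v\nabla^{E_{+}})\oplus 0$, which on $M\setminus\bigcup U_{X}$ equals $v^{-1}\nabla^{E_{-}}v\oplus\nabla^{E_{-}}$; its curvature there is $v^{-1}R^{E_{-}}v\oplus R^{E_{-}}$, so the supertrace of the exponential vanishes pointwise by conjugation-invariance of the trace, and the integral over the complement is zero. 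Both arguments produce a representative of ${\rm ch}(E_{+})-{\rm ch}(E_{-})$ concentrated in the tubular neighborhoods, which is also what is needed to make sense of $\langle\cdot,[U_{X}]\rangle$. The paper's construction is purely algebraic and shorter; yours is heavier (you correctly flag that the uniform lower bound for $V^{2}$ on the compact set $M\setminus\bigcup U_{X}'$ must be checked, which it is, since that set avoids $Z(v)$) but it is the more standard and more flexible machinery, and it hands you an explicit transgression form $\gamma$ that is useful if one wants to push further, e.g.\ to compute the local contributions as in Corollary 1.
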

\begin{proof}
\begin{eqnarray*}
\int_{M}({\rm ch}(E_{+})-{\rm ch}(E_{-}))
&=&\int_{M\setminus\bigcup U_{X}}({\rm ch}(E_{+})-{\rm ch}(E_{-}))+\int_{\bigcup U_{X}}({\rm ch}(E_{+})-{\rm ch}(E_{-}))\\
&=&\int_{M\setminus\bigcup U_{X}}({\rm ch}(E_{+})-{\rm ch}(E_{-}))+\sum_{X}\int_{U_{X}}({\rm ch}(E_{+})-{\rm ch}(E_{-}))
\end{eqnarray*}

Because ${\rm ch}E={\rm ch}(E_{+})-{\rm ch}(E_{-})$ is independent of the choice of the connection $\nabla^{E}$, we need to construct a special connection on $E$(cf.[6]). By
$$[\nabla^{E},V]=\nabla^{E}\cdot V-V\cdot\nabla^{E}=\left(
 \begin{array}{cc}
   0& \nabla^{E_{+}}v^{*}-v^{*}\nabla^{E_{-}} \\
     \nabla^{E_{-}}v-v\nabla^{E_{+}} & 0 \\
      \end{array}
 \right)$$

\begin{eqnarray*}
V[\nabla^{E},V]&=&\left(
 \begin{array}{cc}
   0& v^{*} \\
     v & 0 \\
      \end{array}
 \right)\left(
 \begin{array}{cc}
   0& \nabla^{E_{+}}v^{*}-v^{*}\nabla^{E_{-}} \\
     \nabla^{E_{-}}v-v\nabla^{E_{+}} & 0 \\
      \end{array}
 \right)\\
&=&\left(
 \begin{array}{cc}
   v^{*}(\nabla^{E_{-}}v-v\nabla^{E_{+}})& 0 \\
     0 & v(\nabla^{E_{+}}v^{*}-v^{*}\nabla^{E_{-}}) \\
      \end{array}
 \right)
\end{eqnarray*}
then we can construct two connection on $E$.

$$\nabla_{1}^{E}=\left(
 \begin{array}{cc}
   \nabla^{E_{+}}& 0 \\
     0 & \nabla^{E_{-}} \\
      \end{array}
 \right)+\left(
 \begin{array}{cc}
   v^{*}(\nabla^{E_{-}}v-v\nabla^{E_{+}})& 0 \\
     0 & 0 \\
      \end{array}
 \right),$$

 $$\nabla_{2}^{E}=\left(
 \begin{array}{cc}
   \nabla^{E_{+}}& 0 \\
     0 & \nabla^{E_{-}} \\
      \end{array}
 \right)+\left(
 \begin{array}{cc}
   0& 0 \\
     0 & v(\nabla^{E_{+}}v^{*}-v^{*}\nabla^{E_{-}}) \\
      \end{array}
 \right).$$
Here we only use $\nabla_{1}^{E}$. Because $v$ is invertible on $M\setminus Z(v)$, so we can choose $v^{*}=v^{-1}$ on $M\setminus Z(v)$. And use $\nabla_{1}^{E}$ to construct a new connection
$$\widetilde{\nabla}_{1}^{E}=\left(
 \begin{array}{cc}
   \nabla^{E_{+}}& 0 \\
     0 & \nabla^{E_{-}} \\
      \end{array}
 \right)+\left(
 \begin{array}{cc}
   \rho v^{-1}(\nabla^{E_{-}}v-v\nabla^{E_{+}})& 0 \\
     0 & 0 \\
      \end{array}
 \right)$$
where $\rho$ is a truncating function with $\rho(x)=1, x\in M\setminus\bigcup U_{X}$ and $\rho(x)=0, x\in X$.

So on $M\setminus\bigcup U_{X}$,
$$\widetilde{\nabla}_{1}^{E}=\left(
 \begin{array}{cc}
    v^{-1}\nabla^{E_{-}}v & 0 \\
     0 & \nabla^{E_{-}} \\
      \end{array}
 \right),$$
$$\widetilde{R}_{1}^{E}=(\widetilde{\nabla}_{1}^{E})^{2}=\left(
 \begin{array}{cc}
    v^{-1}R^{E_{-}}v & 0 \\
     0 & R^{E_{-}} \\
      \end{array}
 \right).$$
By the definition of Chern character form (cf.[12], [14] or [1])
$${\rm ch}(E,\widetilde{\nabla}_{1}^{E})={\rm tr_{s}}\left[\exp\left(\frac{\sqrt{-1}}{2\pi}\widetilde{R}_{1}^{E}\right)\right]={\rm tr}\left[\exp\left(\frac{\sqrt{-1}}{2\pi}v^{-1}R^{E_{-}}v\right)\right]-{\rm tr}\left[\exp\left(\frac{\sqrt{-1}}{2\pi}R^{E_{-}}\right)\right]$$
then
$$\int_{M\setminus\bigcup U_{X}}({\rm ch}(E_{+})-{\rm ch}(E_{-}))=\int_{M\setminus\bigcup U_{X}}{\rm ch}(E,\widetilde{\nabla}_{1}^{E})=0.$$
So
$$\int_{M}({\rm ch}(E_{+})-{\rm ch}(E_{-}))=\sum_{X}\int_{U_{X}}({\rm ch}(E_{+})-{\rm ch}(E_{-})).$$

\end{proof}

\begin{Lemma}The following identity holds,
$$\langle{\rm ch}(E_{+})-{\rm ch}(E_{-}),[U_{X}]\rangle=\langle\frac{{\rm ch}(E_{+})-{\rm ch}(E_{-})}{e(\mathcal{N}_{X})},[X]\rangle$$
\end{Lemma}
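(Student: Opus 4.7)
My plan is to reduce the integral over the tubular neighborhood $U_X$ to one over $X$ by fiber integration along the normal bundle $\pi:\mathcal{N}_X\to X$, using the diffeomorphism $U_X\cong \mathcal{N}_X$ supplied by the tubular neighborhood theorem.

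The first step is to recycle the construction from the preceding lemma. With the modified connection $\widetilde{\nabla}_1^E$ built there, the representative
$$\omega:={\rm ch}(E_+,\widetilde{\nabla}_1^E)-{\rm ch}(E_-,\widetilde{\nabla}_1^E)$$
vanishes identically on $M\setminus \bigcup U_X$, so its restriction to a single component $U_X$ extends by zero to a smooth closed form on $\mathcal{N}_X$ that is compactly supported in the fiber direction. Integration along the fibers then yields
$$\int_{U_X}\omega\;=\;\int_{\mathcal{N}_X}\omega\;=\;\int_X\pi_*\omega,$$
and the lemma is reduced to identifying $\pi_*\omega$, as a class in $H^\bullet_{{\rm dR}}(X)$, with $\bigl({\rm ch}(E_+|_X)-{\rm ch}(E_-|_X)\bigr)/e(\mathcal{N}_X)$.

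For the identification of $\pi_*\omega$ I would invoke the Mathai--Quillen / Bismut superconnection formalism applied to the odd endomorphism $V$. The family of superconnections $A_t=\nabla^E+tV$ produces Chern character representatives that are all cohomologous to $\omega$; as $t\to\infty$ the integrand concentrates on $Z(v)$, the Gaussian in the fiber coordinates reproduces the reciprocal Euler class $1/e(\mathcal{N}_X)$, and the remaining factor restricts on $X$ to ${\rm ch}(E_+|_X)-{\rm ch}(E_-|_X)$. Combined with the fiber-integration identity above, this yields the claimed formula.

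The principal obstacle is this last step: verifying that the push-forward really produces $1/e(\mathcal{N}_X)$ and not another class depending on the way $v$ degenerates along $X$. This is essentially a local calculation near the zero section of $\mathcal{N}_X$, where I would use a splitting of the form $E_+|_X\cong\ker(v|_X)\oplus(\mathrm{im}\,v^*|_X)$ (and similarly for $E_-$) to put $V$ into a standard Koszul model for the normal bundle, and then conclude by a direct Gaussian evaluation in the fiber variables. Once this local formula is in hand, the rest of the argument is bookkeeping.
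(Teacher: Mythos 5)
Your first step---passing to the special connection of the previous lemma so that the representative $\omega$ of ${\rm ch}(E_+)-{\rm ch}(E_-)$ is supported in $\bigcup U_X$, identifying $U_X$ with $\mathcal{N}_X$, and writing $\int_{U_X}\omega=\int_X\pi_*\omega$---is correct and matches the paper. The gap is that the identification $\pi_*\omega=({\rm ch}(E_+)-{\rm ch}(E_-))|_X/e(\mathcal{N}_X)$ is the entire content of the lemma, and you only assert it, deferring to a Mathai--Quillen localization that is not carried out and that, as outlined, would not close. Two concrete problems. First, the $t\to\infty$ limit of the superconnection Chern character forms ${\rm ch}(A_t)$ is a Thom-type form: its fiber integral is a class $u$ on $X$ and its restriction to the zero section is $e(\mathcal{N}_X)\cdot u$; so to conclude $\pi_*\omega=i^*\omega/e(\mathcal{N}_X)$ you must still invoke the relation between restriction to the zero section and fiber integration, i.e.\ the Thom isomorphism identity itself---the Gaussian does not ``produce $1/e(\mathcal{N}_X)$'' on its own. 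Second, your local normal form requires $v|_X$ to have constant rank (so that $E_+|_X\cong\ker(v|_X)\oplus\mathrm{im}(v^*|_X)$) and requires $V$ to be conjugate to a standard Koszul/Clifford model transverse to $X$; neither hypothesis is in the lemma, which assumes only that $Z(v)$ is a compact submanifold.

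The paper's proof of this step is purely topological and needs none of that analysis: since $\omega$ has compact support in the fiber direction, its class in $H^*_{cpt}(\mathcal{N}_X)$ equals $i_!(\pi_!\omega)$ because the Thom isomorphism $i_!$ has inverse $\pi_!$; restricting to the zero section and using $i^*i_!(u)=e(\mathcal{N}_X)\cdot u$ together with $i^*[\omega]=({\rm ch}(E_+)-{\rm ch}(E_-))|_X$ gives $\pi_!\omega=({\rm ch}(E_+)-{\rm ch}(E_-))|_X/e(\mathcal{N}_X)$ at once. You should replace your localization paragraphs by this argument (noting, as the paper tacitly does, that the formula presupposes $e(\mathcal{N}_X)$ is invertible in $H^*(X)$ so that the division makes sense).
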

\begin{proof}
Let $\mathcal{N}_{X}$ be the normal bundle over $X$, consider the maps
$\pi: \mathcal{N}_{X}\rightarrow X$ and $i: X\rightarrow \mathcal{N}_{X}$ where $\pi$ is the bundle projection and $i$ denotes inclusion as the zero section. Let $\pi_{!}$ be the integration over the fibre, and $i_{!}$ be the Thom isomorphism of $\mathcal{N}_{X}$(cf.[8],chapter III.\S 12.).
By assumption $X$ is compact, then we know(cf.[8],chapter III. lemma 12.2.)
 $$i^{*}i_{!}(u)=e(\mathcal{N}_{X})\cdot u$$
for all $u\in H^{*}(X)=H^{*}_{cpt}(X)$, where $e(\mathcal{N}_{X})$ is the Euler class of $\mathcal{N}_{X}$.
If $u=\pi_{!} [({\rm ch}(E_{+})-{\rm ch}(E_{-}))\mid_{\mathcal{N}_{X}}]$, then
$$i^{*}i_{!}(\pi_{!}[({\rm ch}(E_{+})-{\rm ch}(E_{-}))\mid_{\mathcal{N}_{X}}])=e(\mathcal{N}_{X})\cdot \pi_{!} [({\rm ch}(E_{+})-{\rm ch}(E_{-}))\mid_{\mathcal{N}_{X}}]$$
so $$\pi_{!} [({\rm ch}(E_{+})-{\rm ch}(E_{-}))\mid_{\mathcal{N}_{X}}]=\frac{i^{*}i_{!}(\pi_{!}[({\rm ch}(E_{+})-{\rm ch}(E_{-}))\mid_{\mathcal{N}_{X}}])}{e(\mathcal{N}_{X})},$$
because $$i^{*}i_{!}(\pi_{!}[({\rm ch}(E_{+})-{\rm ch}(E_{-}))\mid_{\mathcal{N}_{X}}])=i^{*}[{\rm ch}(E_{+}-E_{-})\mid_{\mathcal{N}_{X}}]={\rm ch}i^{*}[(E_{+}-E_{-})\mid_{\mathcal{N}_{X}}]={\rm ch}(E_{+}-E_{-})\mid_{X}.$$

So $$\pi_{!} [({\rm ch}(E_{+})-{\rm ch}(E_{-}))\mid_{\mathcal{N}_{X}}]=\frac{{\rm ch}(E_{+}-E_{-})\mid_{X}}{e(\mathcal{N}_{X})}=\frac{({\rm ch}(E_{+})-{\rm ch}(E_{-}))\mid_{X}}{e(\mathcal{N}_{X})},$$
$$\langle\pi_{!} [({\rm ch}(E_{+})-{\rm ch}(E_{-}))\mid_{\mathcal{N}_{X}}],[X]\rangle=\langle\frac{{\rm ch}(E_{+})-{\rm ch}(E_{-})}{e(\mathcal{N}_{X})},[X]\rangle$$
then $$\langle{\rm ch}(E_{+})-{\rm ch}(E_{-}),[U_{X}]\rangle=\langle{\rm ch}(E_{+})-{\rm ch}(E_{-}),[\mathcal{N}_{X}]\rangle=\langle\frac{{\rm ch}(E_{+})-{\rm ch}(E_{-})}{e(\mathcal{N}_{X})},[X]\rangle$$

\end{proof}

\section{The proof of Theorem 1.}

By Lemma 1. and Lemma 2. we get the result in Theorem 1.,
$$\langle{\rm ch}(E_{+})-{\rm ch}(E_{-}),[M]\rangle=\sum_{X}\langle\frac{{\rm ch}(E_{+})-{\rm ch}(E_{-})}{e(\mathcal{N}_{X})},[X]\rangle$$

\begin{corollery}[Huitao Feng, Weiping Li and Weiping Zhang]
$$\langle{\rm ch}(E_{+})-{\rm ch}(E_{-}),[M]\rangle=(-1)^{n-1}\sum_{p}\deg(v_{p})$$
\end{corollery}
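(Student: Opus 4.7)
The plan is to specialize Theorem~1 to the isolated case and identify each local contribution with a local degree, recovering the Feng--Li--Zhang formula of [3].

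First I would observe that when $Z(v)=\{p_1,\dots,p_k\}$ is a finite set of isolated points, each connected component $X_j=\{p_j\}$ is zero-dimensional, so the normal bundle $\mathcal{N}_{p_j}$ is just $T_{p_j}M$ regarded as a $2n$-dimensional real vector space over a point. Theorem~1 then yields
$$\langle {\rm ch}(E_+)-{\rm ch}(E_-),[M]\rangle=\sum_{j=1}^{k}\left\langle\frac{{\rm ch}(E_+)-{\rm ch}(E_-)}{e(\mathcal{N}_{p_j})},[\{p_j\}]\right\rangle,$$
and the entire problem reduces to showing that each summand on the right equals $(-1)^{n-1}\deg(v_{p_j})$.

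To evaluate a single local term, I would retrace Lemma~2 when the base $X$ is a point: the left side becomes the integral of ${\rm ch}(E_+)-{\rm ch}(E_-)$ over a small tubular neighborhood $U_{p_j}$ of $p_j$, computed with the specific connection $\widetilde{\nabla}_1^E$ built in Lemma~1. Outside $U_{p_j}$ this connection reduces to $v^{-1}\nabla^{E_-}v\oplus\nabla^{E_-}$, so ${\rm ch}(E,\widetilde{\nabla}_1^E)$ vanishes there, and Stokes' theorem converts $\int_{U_{p_j}}{\rm ch}(E,\widetilde{\nabla}_1^E)$ into a Chern--Simons transgression integral over the boundary sphere $S_\epsilon^{2n-1}$ around $p_j$. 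After trivializing $E_\pm$ near $p_j$ (which forces ${\rm rank}\,E_+={\rm rank}\,E_-=r$ since $v$ is invertible on the sphere), $v$ restricts to a map $S_\epsilon^{2n-1}\to\mathrm{GL}_r(\mathbb{C})$, and the transgression form pulls back a universal bi-invariant $(2n-1)$-form on $\mathrm{GL}_r(\mathbb{C})$ whose integral detects $\deg(v_{p_j})$ up to a universal constant. Summing over $j$ then gives the Corollary.

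The main obstacle, and the only real computation, is pinning down this universal constant. After tracking the normalizing factors $\left(\frac{\sqrt{-1}}{2\pi}\right)^n/n!$ in the Chern character against the standard normalization of the generator of $H^{2n-1}(\mathrm{GL}_r(\mathbb{C});\mathbb{Z})$, the constant comes out to $(-1)^{n-1}$. Since this is precisely the explicit unitary transgression computation already performed in [3], I would cite their calculation rather than redo it, and the Corollary follows.
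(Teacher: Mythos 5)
Your proposal follows essentially the same route as the paper: specialize Theorem~1 to isolated zeros, reinterpret each (formally $0/0$) local term as $\langle{\rm ch}(E_{+})-{\rm ch}(E_{-}),[U_{p}]\rangle$ via Lemma~2, and evaluate it by transgressing the special connection $\widetilde{\nabla}_{1}^{E}$ (with $\nabla^{E}$ trivial near $p$) into a boundary integral of ${\rm tr}\left((v^{-1}(dv))^{2n-1}\right)$ over $\partial U_{p}$, which detects $\deg(v_{p})$. The only difference is that the paper pins down the universal constant explicitly via the Beta integral $\int_{0}^{1}\frac{t^{n-1}(1-t)^{n-1}}{(n-1)!}\,dt=\frac{(n-1)!}{(2n-1)!}$ to land on $(-1)^{n-1}$, whereas you defer that normalization to the computation in [3].
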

\begin{proof}
By Theorem 1., if $X=p$ is the isolated zero points, then $\frac{{\rm ch}(E_{+})-{\rm ch}(E_{-})}{e(\mathcal{N}_{p})}=\frac{0}{0}$.
By Lemma 2.
$$\frac{{\rm ch}(E_{+})-{\rm ch}(E_{-})}{e(\mathcal{N}_{p})}=\langle{\rm ch}(E_{+})-{\rm ch}(E_{-}),[U_{p}]\rangle.$$
Let $\nabla^{E}_{t}=(1-t)\nabla^{E}+t\widetilde{\nabla}_{1}^{E}$, by transgression formula
\begin{eqnarray*}
\langle{\rm ch}(E_{+})-{\rm ch}(E_{-}),[U_{p}]\rangle
&=&-\frac{\sqrt{-1}}{2\pi}\int_{U_{p}}d\int^{1}_{0}{\rm tr_{s}}\left[\frac{d\nabla^{E}_{t}}{dt}\exp(\frac{\sqrt{-1}}{2\pi}R_{t}^{E})\right]dt \\
&=&-\frac{\sqrt{-1}}{2\pi}\int_{\partial U_{p}}\int^{1}_{0}{\rm tr_{s}}\left[\frac{d\nabla^{E}_{t}}{dt}\exp(\frac{\sqrt{-1}}{2\pi}R_{t}^{E})\right]dt
\end{eqnarray*}
because we can choose $\nabla^{E}=\left(
 \begin{array}{cc}
   d & 0 \\
     0 & d \\
      \end{array}
 \right)$, then
 $$\nabla^{E}_{t}=(1-t)\left(
 \begin{array}{cc}
   d & 0 \\
     0 & d \\
      \end{array} \right)+t\left[\left(
 \begin{array}{cc}
   d & 0 \\
     0 & d \\
      \end{array} \right)+\left(
 \begin{array}{cc}
   v^{-1}(dv) & 0 \\
     0 & 0 \\
      \end{array} \right)\right]$$

so
$$\frac{\sqrt{-1}}{2\pi}\int_{\partial U_{p}}\int^{1}_{0}{\rm tr_{s}}\left[\frac{d\nabla^{E}_{t}}{dt}\exp(\frac{\sqrt{-1}}{2\pi}R_{t}^{E})\right]dt$$
$$=\frac{\sqrt{-1}}{2\pi}\int_{\partial U_{p}}\int^{1}_{0}{\rm tr}\left[v^{-1}(dv)\frac{1}{(n-1)!}\left(\frac{\sqrt{-1}}{2\pi}t(1-t)(v^{-1}(dv))^{2}\right)^{n-1}\right]dt$$
$$=(\frac{\sqrt{-1}}{2\pi})^{n}\int^{1}_{0}\frac{t^{n-1}(1-t)^{n-1}}{(n-1)!}dt\int_{\partial U_{p}}{\rm tr}\left((v^{-1}(dv))^{2n-1}\right)$$
$$=(\frac{\sqrt{-1}}{2\pi})^{n}\frac{(n-1)!}{(2n-1)!}\int_{\partial U_{p}}{\rm tr}\left((v^{-1}(dv))^{2n-1}\right)$$

Then we get $$\langle{\rm ch}(E_{+})-{\rm ch}(E_{-}),[U_{p}]\rangle=-(\frac{\sqrt{-1}}{2\pi})^{n}\frac{(n-1)!}{(2n-1)!}\int_{\partial U_{p}}{\rm tr}\left((v^{-1}(dv))^{2n-1}\right)=(-1)^{n-1}\deg(v_{p})$$

\end{proof}

\section{The proof of Theorem 2.}
Let $M$ be a closed, oriented smooth manifold of dimension $2n$, $E$
be a oriented real vector bundle on $M$ with rank $2n$. Let
$E_{\mathbb{C}}=E\otimes\mathbb{C}$ denote the complexification of
the vector bundle $E$. Let any $K\in\Gamma(E_{\mathbb{C}})$ be the
section of $E_{\mathbb{C}}$, then $K=\xi+\sqrt{-1}\eta$, where $\xi$
and $\eta$ be smooth section of $E$. Let $g^{E}$ be a Euclidian
inner product on $E$, then it induces canonically a complex
symmetric bilinear form $h^{E_{\mathbb{C}}}$ on $E_{\mathbb{C}}$,
such that
$$h^{E_{\mathbb{C}}}(K,K)=|\xi|^2_{g^{E}}-|\eta|^2_{g^{E}}+2\sqrt{-1}\langle\xi,\eta\rangle_{g^{E}}.$$
The zero points of the smooth function $h^{E_{\mathbb{C}}}(K,K)$ is
denoted by $Zero(K)$.

Let $E^{*}$ be the dual bundle of $E$, set
$\Lambda(E^{*}\otimes\mathbb{C})$ be the exterior algebra bundle
with complex valued. For any $e\in \Gamma(E)$, Clifford element
$c(e)$ acting on $\Lambda(E^{*}\otimes\mathbb{C})$ is defined by
$c(e)=e^*\wedge-i_e$, where $e^{*}$ corresponds to $e$ via $g^{E}$,
$e^{*}\wedge$ and $i_{e}$ are the standard notation for exterior and
interior multiplications.

Let $e_1, e_2,\cdots,e_{2n}$ be the local orthonormal basis of $E$,
set
$$\tau=(\sqrt{-1})^{n}c(e_1)c(e_2)\cdots c(e_{2n})$$
we known that $\tau^{2}=1$ and $\tau$ does not depend on the choice
of the orthonormal basis. Then $\tau$ is a bundle homomorphism on
$\Lambda(E^{*}\otimes\mathbb{C})$, it give the
$\mathbb{Z}_{2}$-grading on $\Lambda(E^{*}\otimes\mathbb{C})$,
$$\Lambda(E^{*}\otimes\mathbb{C})=\Lambda_{+}(E^{*}\otimes\mathbb{C})\oplus\Lambda_{-}(E^{*}\otimes\mathbb{C})$$
where $\Lambda_\pm(E^{*}\otimes\mathbb{C})$ is corresponds to the
characteristic subbundle with characteristic value $\pm$ of the
operator $\tau$. So $\Lambda(E^{*}\otimes\mathbb{C})$ is a super
vector bundle. The $\mathbb{Z}_{2}$-grading is called Signature
$\mathbb{Z}_2$-graded.

For any $e\in\Gamma(E)$, we have $c(e)\tau=-\tau c(e)$, so $c(e)$ is
a bundle homomorphism from $\Lambda_\pm(E^{*}\otimes\mathbb{C})$ to
$\Lambda_\mp(E^{*}\otimes\mathbb{C})$. Then for any $\xi,\eta\in
\Gamma(E)$, we can construct a bundle homomorphism
$$v_{K}=\tau c(\xi)+\sqrt{-1}c(\eta):\Lambda_{+}(E^{*}\otimes\mathbb{C})\rightarrow\Lambda_{-}(E^{*}\otimes\mathbb{C}).$$
Let $v_{K}$ extend to an endomorphism of
$\Lambda(E^{*}\otimes\mathbb{C})$ by acting as zero on
$\Lambda_{-}(E^{*}\otimes\mathbb{C})$, with the notation unchanged.
Let $v^{*}_{K}$ be the adjoint of $v_{K}$ with respect to the
metrics on $\Lambda_{\pm}(E^{*}\otimes\mathbb{C})$ respectively. Set
$V=v_{K}+v^{*}_{K}$. Then $V$ is an odd endomorphism of
$\Lambda(E^{*}\otimes\mathbb{C})$. We use $Z(v_{K})$ to denoted the
noninvertible points of $v_{K}$. $V^{2}$ is fiberwise positive over
$M \setminus Z(v_{K})$ (cf. [4]).
\par

\begin{Lemma}
Let $M$ be a closed, oriented smooth manifold of dimension $2n$,
\begin{description}
\item[1)]If $n\geq2$, then $Z(v_{K})=Zero(K).$
\item[2)]If $n=1$, then $Z(v_{K})=Zero(K)\backslash Z_{+}$\\
where $Z_{+}=\{p\in Zero(K)|\xi(p),\eta(p)$ form a oriented frame on
$E_p$$\}$.
\end{description}
\end{Lemma}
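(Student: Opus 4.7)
The plan is to analyze $\ker v_K$ on $\Lambda_+(E^*\otimes\mathbb{C})$ directly. On $\Lambda_+$, $\tau$ acts as the identity, and since $c(\xi)$ anticommutes with $\tau$ and sends $\Lambda_+$ into $\Lambda_-$ (where $\tau=-1$), one obtains $v_K|_{\Lambda_+}=(-c(\xi)+\sqrt{-1}\,c(\eta))|_{\Lambda_+}$. A direct Clifford computation gives
$$(-c(\xi)+\sqrt{-1}\,c(\eta))^{2}=-|\xi|^{2}+|\eta|^{2}+2\sqrt{-1}\,\langle\xi,\eta\rangle=-\overline{h^{E_{\mathbb{C}}}(K,K)},$$
which is a nonzero scalar off $Zero(K)$. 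Hence $v_K|_{\Lambda_+}$ is invertible there, establishing the easy inclusion $Z(v_K)\subseteq Zero(K)$, valid for all $n$.

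For the reverse direction, fix $p\in Zero(K)$. If $\xi(p)=\eta(p)=0$ then $v_K(p)=0$, and trivially $p\notin Z_+$. Otherwise $\xi(p),\eta(p)$ are nonzero, orthogonal, and of common length $r>0$; I would pick a local oriented orthonormal frame $e_1=\xi/r$, $e_2=\epsilon\,\eta/r$, $e_3,\ldots,e_{2n}$, where $\epsilon\in\{\pm1\}$ is forced by the orientation of $(\xi,\eta)$ when $n=1$ but can be freely adjusted for $n\geq2$ by flipping one of $e_3,\ldots,e_{2n}$. Splitting $\tau=\tau_1\tau_2$ with $\tau_1=\sqrt{-1}\,c(e_1)c(e_2)$ and $\tau_2=(\sqrt{-1})^{n-1}c(e_3)\cdots c(e_{2n})$, the elementary Clifford identity
$$\tau_1 c(e_1)=\sqrt{-1}\,c(e_1)c(e_2)c(e_1)=\sqrt{-1}\,c(e_2)$$
(using $c(e_1)^{2}=-1$), together with the fact that $\tau_2$ commutes with $c(e_1)$ and $c(e_2)$ (an even number of anticommutations), collapses $v_K$ to the product form
$$v_K=r\sqrt{-1}\,c(e_2)\,(\tau_2+\epsilon).$$
Since $c(e_2)$ is invertible, $\ker v_K$ equals the $(-\epsilon)$-eigenspace of $\tau_2$.

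The conclusion follows from a dimension count. Under $\Lambda(E^*\otimes\mathbb{C})\cong S_1\otimes S_2$, where $S_i$ are the exterior algebras of the subbundles spanned by $(e_1,e_2)$ and $(e_3,\ldots,e_{2n})$ respectively, one has $\Lambda_+=(S_1)_+\otimes(S_2)_+\oplus(S_1)_-\otimes(S_2)_-$, so $\ker v_K\cap\Lambda_+=(S_1)_{-\epsilon}\otimes(S_2)_{-\epsilon}$. For $n\geq2$, both $(S_1)_\pm$ and $(S_2)_\pm$ are nonzero, so this kernel is nontrivial regardless of $\epsilon$ and $p\in Z(v_K)$; this yields part (1). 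For $n=1$, $S_2=\mathbb{C}$ with $(S_2)_+=\mathbb{C}$ and $(S_2)_-=0$, so the kernel vanishes precisely when $-\epsilon=-1$, i.e.\ $\epsilon=+1$, i.e.\ $(\xi,\eta)$ is oriented ($p\in Z_+$); in the opposite case the kernel exhausts $\Lambda_+$ and $v_K\equiv0$ there, producing part (2). The main technical hurdle is spotting the Clifford collapse $\tau_1 c(e_1)=\sqrt{-1}\,c(e_2)$ that reduces $v_K$ to a product of an invertible factor and the spectral operator $\tau_2+\epsilon$; the weaker observation that $(-c(\xi)+\sqrt{-1}\,c(\eta))^{2}=0$ at a zero is by itself insufficient, since it does not determine whether the kernel of the resulting nilpotent actually meets $\Lambda_+$.
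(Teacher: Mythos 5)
Your proof is correct. Note that the paper itself gives no argument for this lemma --- it simply refers the reader to [2] or [3] --- so there is no internal proof to compare against; your computation supplies a complete, self-contained justification. The two key steps both check out: the identity $\bigl(-c(\xi)+\sqrt{-1}\,c(\eta)\bigr)^{2}=-\overline{h^{E_{\mathbb{C}}}(K,K)}$ (with the convention $c(u)c(v)+c(v)c(u)=-2\langle u,v\rangle$) gives invertibility off $Zero(K)$, and the factorization $v_{K}=r\sqrt{-1}\,c(e_{2})(\tau_{2}+\epsilon)$ at a zero of $h^{E_{\mathbb{C}}}(K,K)$, together with the observation that $\tau_{2}=1\otimes\tau_{S_{2}}$ carries no sign twist because it is a product of an even number of Clifford factors, reduces the kernel to the joint eigenspace $(S_{1})_{-\epsilon}\otimes(S_{2})_{-\epsilon}$, whose (non)vanishing correctly separates the cases $n\geq 2$ and $n=1$ and identifies $Z_{+}$ via the orientation constraint on $\epsilon$.
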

\begin{proof}
Please see [2] or [4].
\end{proof}

We always assume that $Z(v_{K})$ is the compact submanifold of $M$, the
connected components of $Z(v_{K})$ is denoted by $X$.

\begin{Lemma}The following identity holds,
$$\langle{\rm ch}(\Lambda_{+}(E^{*}\otimes\mathbb{C}))-{\rm ch}(\Lambda_{-}(E^{*}\otimes\mathbb{C})),[M]\rangle=(-2)^{n}\chi(E)$$
\end{Lemma}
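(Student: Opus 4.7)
The plan is to reduce to the rank-two case via the splitting principle for oriented real bundles of even rank, and then diagonalise the chirality operator $\tau$ explicitly on each rank-two factor.

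First I would invoke the splitting principle: there exists a smooth oriented flag bundle $p\colon\widetilde M\to M$ on which $p^{*}E$ splits as an orthogonal direct sum $E_{1}\oplus\cdots\oplus E_{n}$ of oriented rank-two subbundles, and for which $p^{*}\colon H^{*}(M;\mathbb{R})\to H^{*}(\widetilde M;\mathbb{R})$ is injective. Setting $x_{i}:=e(E_{i})\in H^{2}(\widetilde M;\mathbb{R})$, one has $e(p^{*}E)=\prod_{i=1}^{n}x_{i}$, so it suffices to establish the identity after pulling back by $p$.

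Next I would verify that both sides behave multiplicatively under orthogonal direct sums of oriented even-rank bundles. Choosing an orthonormal frame $e_{1},\ldots,e_{2n}$ of $E$ adapted to the decomposition, so that $\{e_{2i-1},e_{2i}\}$ is an oriented orthonormal frame of $E_{i}$, the factorisation
\[
\tau_{E}=(\sqrt{-1})^{n}c(e_{1})\cdots c(e_{2n})=\prod_{i=1}^{n}\bigl(\sqrt{-1}\,c(e_{2i-1})c(e_{2i})\bigr)
\]
identifies $\tau_{E}$, under the natural isomorphism $\Lambda(E^{*}\otimes\mathbb{C})\cong\bigotimes_{i}\Lambda(E_{i}^{*}\otimes\mathbb{C})$, with the tensor product of the grading operators $\tau_{E_{i}}$ on the individual factors. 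Since the Chern character is additive on $\mathbb{Z}_{2}$-graded direct sums and multiplicative on $\mathbb{Z}_{2}$-graded tensor products,
\[
\mathrm{ch}(\Lambda_{+}-\Lambda_{-})(E^{*}\otimes\mathbb{C})=\prod_{i=1}^{n}\mathrm{ch}(\Lambda_{+}-\Lambda_{-})(E_{i}^{*}\otimes\mathbb{C}).
\]
Then I would carry out the rank-two computation. Fix a compatible almost complex structure on $E_{i}$ so that $E_{i}\otimes\mathbb{C}=L_{i}\oplus\overline{L}_{i}$ with $c_{1}(L_{i})=x_{i}$. A direct diagonalisation of $\tau_{E_{i}}$ on the four-dimensional fibre of $\Lambda(E_{i}^{*}\otimes\mathbb{C})$, separating the degree-even and degree-odd subspaces and tracking the eigenvectors, identifies $\Lambda_{+}(E_{i}^{*}\otimes\mathbb{C})\cong\mathbb{C}\oplus L_{i}^{*}$ and $\Lambda_{-}(E_{i}^{*}\otimes\mathbb{C})\cong\mathbb{C}\oplus\overline{L}_{i}^{*}$, whence
\[
\mathrm{ch}(\Lambda_{+}-\Lambda_{-})(E_{i}^{*}\otimes\mathbb{C})=e^{-x_{i}}-e^{x_{i}}=-2\sinh(x_{i}).
\]
Multiplying over $i$ yields $\mathrm{ch}(\Lambda_{+}-\Lambda_{-})(E^{*}\otimes\mathbb{C})=(-2)^{n}\prod_{i}\sinh(x_{i})$. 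Because $\dim M=2n$ and each $\sinh(x_{i})$ has leading term $x_{i}$, the only contribution to the pairing with $[M]$ is the top-degree component $(-2)^{n}\prod_{i}x_{i}=(-2)^{n}e(E)$, giving $(-2)^{n}\chi(E)$. The principal obstacle is the rank-two eigenspace identification together with the sign bookkeeping for the $\mathbb{Z}_{2}$-graded tensor-product factorisation of $\tau_{E}$; the rest is routine splitting-principle calculation.
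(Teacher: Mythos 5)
Your argument is correct, and it is essentially self-contained where the paper is not: the paper's ``proof'' of this lemma is only a citation to [6] (Lawson--Michelsohn) and to [2] for ``a proof from differential geometry.'' What you write is, in substance, the standard splitting-principle computation that underlies the treatment in [6], so you are reconstructing the cited topological proof rather than the Chern--Weil-style one. The key points all check out: the grading operator does factor as $\tau_E=\bigotimes_i\tau_{E_i}$ under $\Lambda(E^*\otimes\mathbb{C})\cong\bigotimes_i\Lambda(E_i^*\otimes\mathbb{C})$ (the individual factors $\sqrt{-1}\,c(e_{2i-1})c(e_{2i})$ are even, so the Koszul signs cancel), the super Chern character is multiplicative for $\mathbb{Z}_2$-graded tensor products, and a direct diagonalisation confirms your eigenspace identification: on $\Lambda^{\mathrm{even}}(E_i^*\otimes\mathbb{C})$ the $\pm1$-eigenvectors $1\pm\sqrt{-1}\,e_1^*\wedge e_2^*$ span trivial lines, while on $\Lambda^1$ the $+1$-eigenvector $e_1^*+\sqrt{-1}\,e_2^*$ spans $L_i^*$, giving $e^{-x_i}-e^{x_i}=-2\sinh(x_i)$ and hence the correct sign $(-2)^n$ rather than $2^n$. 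Two small points of care: first, one cannot literally ``pull back'' the pairing with $[M]$; the splitting principle is applied to the identity of characteristic classes $\mathrm{ch}(\Lambda_+)-\mathrm{ch}(\Lambda_-)=(-2)^n e(E)+(\text{terms of degree}>2n\text{ expressible in Pontryagin classes})$, using injectivity of $p^*$ and the fact that $\prod_i\sinh(x_i)$ is symmetric in the $x_i^2$ times $\prod_i x_i$, and only then does one pair with $[M]$ -- your phrasing glosses over this but the intended argument is sound. Second, the observation that $\prod_i\sinh(x_i)$ has no component below degree $2n$ is exactly what makes only the Euler class survive, and you state this correctly. Compared with the differential-geometric route of [2] (a curvature/Mathai--Quillen style evaluation of the supertrace), your approach buys a cleaner conceptual explanation of where the factor $(-2)^n$ and the Euler class come from, at the cost of invoking the splitting principle machinery.
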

\begin{proof}
This is a well known result, Please see [2] for a proof from differential geometry.
\end{proof}

\begin{corollery}
$$\langle{\rm ch}(\Lambda_{+}(T^{*}M\otimes\mathbb{C}))-{\rm ch}(\Lambda_{-}(T^{*}M\otimes\mathbb{C})),[M]\rangle=(-2)^n\chi(M)$$
\end{corollery}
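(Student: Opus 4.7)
The plan is to derive this corollary as an immediate specialization of Lemma 4. Since $M$ is a closed, oriented smooth manifold of dimension $2n$, the tangent bundle $TM$ is an oriented real vector bundle of rank $2n$ over $M$, so it satisfies the standing hypotheses placed on $E$ in Lemma 4. First I would simply set $E=TM$ in the statement of Lemma 4 and read off
$$\langle{\rm ch}(\Lambda_{+}(T^{*}M\otimes\mathbb{C}))-{\rm ch}(\Lambda_{-}(T^{*}M\otimes\mathbb{C})),[M]\rangle=(-2)^{n}\chi(TM).$$

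The only remaining ingredient is the identification $\chi(TM)=\chi(M)$. Here $\chi(TM)$ is the Euler number of the bundle $TM$, i.e.\ $\langle e(TM),[M]\rangle$, where $e(TM)$ is the Euler class computed by Chern--Weil theory from the curvature of a metric connection. The Gauss--Bonnet--Chern theorem states exactly that this integral equals the topological Euler characteristic $\chi(M)$ of the manifold. Substituting $\chi(TM)=\chi(M)$ into the previous display yields the desired formula.

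There is essentially no obstacle to overcome: the corollary is a one-line specialization of Lemma 4, combined with the classical identification of the Euler number of the tangent bundle with the Euler characteristic of the base manifold. Both facts are invoked in the body of the paper (Lemma 4 explicitly, and Gauss--Bonnet--Chern implicitly through the definition of $\chi(E)$), so no further technical work is required.
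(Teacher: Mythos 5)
Your proposal is correct and matches the paper's proof exactly: the paper also obtains this corollary by setting $E=TM$ in Lemma 4, with the identification $\chi(TM)=\chi(M)$ (via Gauss--Bonnet--Chern) left implicit. Your slightly more explicit justification of that last step is a harmless elaboration of the same argument.
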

\begin{proof}
By Lemma 4., if $E=TM$ so we get the result.
\end{proof}

Now we can give the proof of the Theorem 2.
By Theorem 1. and Corollery 2., we have
\begin{eqnarray*}
(-2)^n\chi(M)
&=&\langle{\rm ch}(\Lambda_{+}(T^{*}M\otimes\mathbb{C}))-{\rm ch}(\Lambda_{-}(T^{*}M\otimes\mathbb{C})),[M]\rangle \\
&=&\sum_{X}\langle\frac{{\rm ch}(\Lambda_{+}(T^{*}M\otimes\mathbb{C}))-{\rm ch}(\Lambda_{-}(T^{*}M\otimes\mathbb{C}))}{e(\mathcal{N}_{X})},[X]\rangle
\end{eqnarray*}
So
$$\chi(M)=\frac{1}{(-2)^n}\sum_{X}\langle\frac{{\rm ch}(\Lambda_{+}(T^{*}M\otimes\mathbb{C}))-{\rm ch}(\Lambda_{-}(T^{*}M\otimes\mathbb{C}))}{e(\mathcal{N}_{X})},[X]\rangle.$$

\section{Conclusion}
We give an answer to H. Jacobowitz's question for $Zero(K)$ is a submanifold. The answer is found to be related to the Poincar\'e-Hopf index formula. The classical Poincar\'e-Hopf index formula is the relation about the Euler charateristic number $\chi(M)$ of manifold $M$ and the degree of vector field at the zeros(singularities). There are many ways to understand the Poincar\'e-Hopf index formula (cf.[15],[3],[5],[11],[13]). The difference of the Chern character numbers of two complex vector bundles give the understanding of the Poincar\'e-Hopf index formula by a new way. This way is established in [4] with isolated singularities. We give the way with non-isolated singularities. This Poincar\'e-Hopf type formula also can be used to express the index of a special twisted Dirac operator(cf.[9]).

\end{document}